\newtheorem{thm}{Theorem}[section]
\newtheorem{assum}[thm]{Assumption}
\newtheorem{lem}[thm]{Lemma}
\newtheorem{cor}[thm]{Corollary}
\numberwithin{equation}{section}
\title{\LARGE \bf
Nested Distributed Gradient Methods with Adaptive Quantized Communication
}
\author{Albert S. Berahas, Charikleia Iakovidou and Ermin Wei
\thanks{This work was supported by the DARPA award HR-001117S0039.}
\thanks{A. S. Berahas is with the Department of Industrial and Systems Engineering, Lehigh University, Bethlehem, PA, USA.  {\tt\small albertberahas@lehigh.edu}}%
\thanks{{C. Iakovidou  and E. Wei are with the Department of Electrical and Computer Engineering, Northwestern University,
        Evanston, IL USA,  
        {\tt\small chariako@u.northwestern.edu, ermin.wei@northwestern.edu}}}%
}
\begin{document}

\maketitle
\thispagestyle{empty}
\pagestyle{empty}

\begin{abstract} In this paper, we consider minimizing a sum of local convex objective functions in a distributed setting, where communication can be costly. We propose and analyze a class of nested distributed gradient methods with adaptive quantized  communication (NEAR-DGD+Q). We show the effect of performing multiple quantized communication steps on the rate of convergence and on the size of the neighborhood of convergence, and prove $R$-Linear convergence to the exact solution with  increasing number of consensus steps and adaptive quantization. We test the performance of the method, as well as some practical variants, on quadratic functions, and show the effects of multiple quantized communication steps in terms of iterations/gradient evaluations, communication and cost.

\begin{keywords}
Distributed Optimization, Network Optimization, Optimization Algorithms, Communication, Quantization
\end{keywords}

\end{abstract}

\section{INTRODUCTION}

The focus of this paper is on designing and analyzing distributed optimization algorithms that employ multiple agents ($n>1$) in an undirected connected network with the collective goal of minimizing
\begin{align}		\label{eq:prob}
	\min_{x\in \mathbb{R}^p}\quad h(x) =  \sum_{i=1}^n f_i(x),
\end{align}
where $h: \mathbb{R}^p \rightarrow \mathbb{R}$ is the {\it global objective function}, $f_i: \mathbb{R}^p \rightarrow \mathbb{R}$ for each $i\in \{1,2,...,n \}$ is the {\it local objective function} available only to node (agent) $i$, and $x\in \mathbb{R}^p$ is the decision variable that the agents are optimizing cooperatively. Such problems arise in a plethora of applications such as wireless sensor networks \cite{ling2010decentralized,predd2006distributed}, multi-vehicle and multi-robot networks \cite{cao2013overview,zhou2011multirobot}, smart grids \cite{giannakis2013monitoring,kekatos2013distributed} and machine learning \cite{duchi2012,tsianos2012consensus}, to mention a few. 

In order to optimize \eqref{eq:prob} it is natural to employ a {\it distributed optimization algorithm}, where the agents iteratively perform local {\it computations} based on a local objective function and local {\it communications}, i.e., information exchange with their neighbors in the underlying network. To decouple the computation of individual agents, \eqref{eq:prob} is often reformulated as the  following {\it consensus optimization problem} \cite{bertsekas1989parallel},
\begin{align}		\label{eq:cons_prob}
	\min_{x_i \in \mathbb{R}^p}&\quad \sum_{i=1}^n f_i(x_i)\\
	 \text{s.t.} &\quad  x_i = x_j, \quad \forall i, j \in \mathcal{N}_i, \nonumber
\end{align}
where $x_i \in \mathbb{R}^p$ for each agent $i\in \{1,2,...,n \}$ is a local copy of the decision variable, and  $\mathcal{N}_i$ denotes the set of (one-step) neighbors of the $i^{th}$ agent. The {\it consensus constraint} imposed in problem \eqref{eq:cons_prob} enforces that local copies of neighboring nodes are equal; assuming that the underlying network is connected, the constraint ensures that all local copies are equal and as a result problems \eqref{eq:prob} and \eqref{eq:cons_prob} are equivalent.

For compactness, we express problem \eqref{eq:cons_prob} as
\begin{align}		\label{eq:cons_prob1}
	\min_{x_i \in \mathbb{R}^p}&\quad f(\textbf{x}) = \sum_{i=1}^n f_i(x_i)\\
	\text{s.t.} & \quad (\textbf{W}\otimes I_p)\textbf{x} = \textbf{x}, \nonumber
\end{align}
where $\textbf{x} \in \mathbb{R}^{np}$ is a concatenation of all local $x_i$'s, $\textbf{W} \in \mathbb{R}^{n \times n}$ is a matrix that captures information about the underlying graph, $I_p$ is the identity matrix of dimension $p$, and the operator $\otimes$ denotes the Kronecker product operation, with $\textbf{W}\otimes I_p \in \mathbb{R}^{np \times np}$. Matrix $\textbf{W}$, known as the {\it consensus matrix}, is a symmetric, doubly-stochastic matrix with {diagonal elements} $w_{ii}>0$ and {off-diagonal elements} $w_{ij}>0$ ($i\neq j$) if and only if $i$ and $j$ are neighbors in the underlying communication network. This matrix has the property that $(\textbf{W}\otimes I_p) \textbf{x}=\textbf{x}$ if and only if $x_i=x_j$ for all $i$ and $j$ in the connected network, i.e., problems \eqref{eq:cons_prob} and \eqref{eq:cons_prob1} are equivalent; see \cite{bertsekas1989parallel,nedic2009distributed,balCC} for more details.

Distributed optimization algorithms commonly rely on the assumption that vectors are real-valued. However, in digital systems the communication bandwidth is finite and thus information exchanged between agents needs to be quantized. This limitation can prevent algorithms from converging to the true optimal value \cite{doan_accelerating_2018,doan_distributed_2018,pu_quantization_2017,aysal_distributed_2008,minghui_zhu_convergence_2008,charron-bost_randomization_2018}. Moreover, as the dimension increases, communication between agents becomes the bottleneck for performance, and constraining it is essential for achieving fast convergence \cite{reisizadeh_exact_2018,alistarh2017qsgd,rabbat_quantized_2005}.

As a result, there is an extensive body of work studying the effects of {\it quantized communication} on the convergence of distributed algorithms and designing methods robust to {\it quantization error} \cite{doan_accelerating_2018,aysal_distributed_2008,doan_distributed_2018,minghui_zhu_convergence_2008,pu_quantization_2017,charron-bost_randomization_2018,reisizadeh_exact_2018,alistarh2017qsgd,rabbat_quantized_2005,yuan_distributed_2012,li_distributed_2017,nedic_distributed_2008,nedic_distributed_2009,kashyap_quantized_2006,lee2018finite,yi_quantized_2014}. Common approaches include, but are not limited to, allowing the number of quantization levels to approach infinity \cite{nedic_distributed_2008,yuan_distributed_2012,li_distributed_2017,rabbat_quantized_2005,nedic_distributed_2009}, preserving the statistical properties of vectors with probabilistic quantization \cite{aysal_distributed_2008,yuan_distributed_2012,li_distributed_2017}, using weighted averages of quantized consensus and local information \cite{reisizadeh_exact_2018,doan_distributed_2018,yuan_distributed_2012}, designing custom quantizers \cite{lee2018finite,doan_accelerating_2018,pu_quantization_2017} and employing encoding/decoding schemes to alleviate communication load \cite{alistarh2017qsgd,reisizadeh_exact_2018,yi_quantized_2014}. However, with the exception of \cite{lee2018finite,pu_quantization_2017}, these methodologies are unable to achieve geometric convergence rates. In \cite{pu_quantization_2017}, the authors solve a variation of the consensus problem where the local objective functions depend on both local and neighbor variables. The authors of \cite{lee2018finite} extend \cite{di2016next}; however, while gradient tracking algorithms (e.g., \cite{di2016next}) achieve exact convergence at geometric rates, they are not easily tailored to application-specific conditions, such as costly communication or computation. 

%

In this paper, we investigate a class of first-order primal methods that perform nested communication and computation steps, that are adaptive and in which the communication is quantized.  Our work is closely related to a few lines of research that we delineate below:
\begin{enumerate}
	\itemsep0em
	\item \textbf{distributed first-order primal algorithms} \cite{bertsekas1989parallel,balCC,nedic2009distributed,TsitsiklisThes,yuan2016convergence,AsuChapter,SNVStochasticGradient}: methods that use only gradient information,
	\item \textbf{nested} \cite{balCC,chen2012fast,sayed2013diffusion,MouraFastGrad}: methods that decompose the communication and computation steps,
	\item \textbf{communication efficient} \cite{lee2018finite,reisizadeh_exact_2018,alistarh2017qsgd,rabbat_quantized_2005,doan_accelerating_2018,chow2016expander,lan2017communication, shamir2014communication,tsianos2012communication}: methods that incorporate communication considerations in the design,
	\item \textbf{exact} \cite{balCC,extra, di2016next,nedic2017achieving, qu2017harnessing,li2017decentralized,reisizadeh_exact_2018}: methods that converge to the optimal solution using a fixed step length on strongly convex functions,
	\item \textbf{adaptive} \cite{balCC,chen2012fast,MouraFastGrad}: methods that do not perform a fixed number of communication steps per iteration,
	\item \textbf{quantized} \cite{doan_accelerating_2018,aysal_distributed_2008,doan_distributed_2018,minghui_zhu_convergence_2008,pu_quantization_2017,charron-bost_randomization_2018,reisizadeh_exact_2018,alistarh2017qsgd,rabbat_quantized_2005,yuan_distributed_2012,li_distributed_2017,nedic_distributed_2008,nedic_distributed_2009,kashyap_quantized_2006,lee2018finite,yi_quantized_2014}: methods that exchange only quantized information.
\end{enumerate}


The main innovation of this paper is to extend and generalize the existing analysis for a class of nested gradient-based distributed algorithms to account for quantized communication. More specifically, we focus on variants of the NEAR-DGD method \cite{balCC} and analyze a general algorithm that (potentially) takes both multiple and quantized consensus steps at every iteration. We show the effect (theoretically and empirically) of performing  multiple quantized communication steps on the rate of convergence and the size of the neighborhood. Moreover, we prove $R$-Linear convergence to the exact solution with an increasing number of consensus steps and adaptive quantization using a constant steplength on strongly convex functions.

\medskip
The paper is organized as follows.  In Section \ref{sec:NEAR-DGD} we introduce the NEAR-DGD method, and in Section \ref{sec:NEAR-DGD-Q} we present the NEAR-DGD method with quantized communication. We provide a convergence analysis for the method in Section \ref{sec:conv_analysis}. In Section \ref{sec:num_res} we illustrate the empirical performance of the method, and in Section \ref{sec:fin_rem} we provide some concluding remarks and future work.
We conclude this section with a discussion about quantization.

\subsection{Quantization and Adaptive Quantization}
\label{sec:quant}

Let $b_0>0$ denote the number of transmitted bits through a communication channel. The total number of quantization levels is 
    $B_0 = 2^{b_0}$, 
and the distance between two consecutive quantization levels is
\begin{align*}
    \Delta = l_{i+1}-l_i=\frac{u-l}{B_0-1},
\end{align*}
where $l_{i+1}$ and $l_i$ are two consecutive quantization levels and $[l,u]$ is the quantization interval. 

The error due to quantization can be characterized as follows; for some $z \in [l_i,l_{i+1}]$, the quantization error is bounded by
\begin{align*}
    \|z - \mathcal{Q}[z]\|  \leq l_{i+1}-l_i = \Delta,
\end{align*}
where $\mathcal{Q}[z]$ is the quantized version of $z$. Thus, for $\mathbf{z} \in \mathbb{R}^d$ we have the quantization error $\pmb{\epsilon}$ is bounded by
\begin{align*}
  \|\pmb{\epsilon}\| = \|\mathbf{z} - \mathcal{Q}[\mathbf{z}] \| & = \sqrt{\sum_{i=1}^d (z_i - \mathcal{Q}[z_i])^2} \leq 
  \Delta \sqrt{d}.
\end{align*}

Consider the setting in which quantization is adaptive. Let $b_k+b_0 \geq 1$ (for $k=1,2,...$) denote the number of transmitted bits at the $k$th iteration. Then, the total number of quantization levels is
    $B_k = 2^{b_k}B_0$,
and the distance between quantization levels becomes
\begin{align*}
    \Delta_k = \frac{u-l}{2^{b_k}B_0-1} \leq 0.5^{b_k} \Delta.
\end{align*}
Thus, the upper bound of the error due to quantization at the $k$th iteration is
\begin{align*}
    {\Delta}_k \leq \eta^k{\Delta},\quad \text{where $\eta \in (0,1)$}.
\end{align*}

\section{The NEAR-DGD Method}
\label{sec:NEAR-DGD}

In this section, we review the Nested Exact Alternating Recursions method (NEAR-DGD), proposed in \cite{balCC}, upon which we build our quantized algorithm. In its most general form, the $\tau^{th}$ iterate of the NEAR-DGD method can be expressed as
\begin{align*}
	\textbf{x}_\tau = [\mathcal{W}^{t(\tau)}[\mathcal{T}[\cdots
	\rlap{$\overbrace{\phantom{[\mathcal{W}^{t(k)}[\mathcal{T}[}}^{\textbf{x}_k}$}
	[\mathcal{W}^{t(k)}
	\underbrace{[\mathcal{T}[\mathcal{W}^{t(k-1)}[}_{\textbf{y}_k} \mathcal{T}[\cdots\\
	\quad \cdots[\mathcal{W}^{t(1)}[\mathcal{T}[\textbf{x}_0]]]\cdots]]]]]\cdots]]],
\end{align*}
where $\mathcal{T}[\mathbf{x}] = \mathbf{x} - \alpha \nabla \textbf{f}(\textbf{x})$ is the gradient operator, $\mathcal{W}[\mathbf{x}] = (\textbf{W}\otimes I_p)\mathbf{x}$ is the consensus operator and $\mathcal{W}^{t(k)}[x]$ denotes $t(k)$ nested consensus operations (steps),
\begin{align*}
	\mathcal{W}^{t(k)}[x] = \underbrace{\mathcal{W}[\cdots[\mathcal{W}[\mathcal{W}}_{\text{$t(k)$ operations}}[x]]]\cdots].
\end{align*}
Alternatively, one can view the NEAR-DGD method as a method that produces an intermediate iterate $\textbf{y}_k$ after the gradient step, and the iterate $\textbf{x}_k$ after the consensus steps. The iterates $\textbf{x}_k$ and $\textbf{y}_k$ can be expressed as
\begin{gather*}	
	\textbf{x}_k  = \mathcal{W}^{t(k)}[\mathbf{y}_k] = (\textbf{W}\otimes I_p)^{t(k)}\textbf{y}_k =  \textbf{Z}^{t(k)}\textbf{y}_k \label{eq:tw} \\
	\textbf{y}_{k+1}  =\mathcal{T}[\mathbf{x}_k] =  \textbf{x}_k -   \alpha \nabla \textbf{f}(\textbf{x}_k). 
\end{gather*}
By setting the parameters $t(k)$ appropriately, one can recover all the methods proposed in \cite{balCC}.

\section{The NEAR-DGD Method with Quantized Communication}
\label{sec:NEAR-DGD-Q}

In this section, we introduce the NEAR-DGD method with quantized communication---which we call NEAR-DGD+Q. The $\tau^{th}$ iterate of the method can be expressed as
\begin{align*}
	\textbf{x}_\tau = [\mathcal{W}_{\mathcal{Q}}^{t(\tau)}[\mathcal{T}[\cdots
	\rlap{$\overbrace{\phantom{[\mathcal{W}_{\mathcal{Q}}^{t(k)}[\mathcal{T}[}}^{\textbf{x}_k}$}
	[\mathcal{W}_{\mathcal{Q}}^{t(k)}
	\underbrace{[\mathcal{T}[\mathcal{W}_{\mathcal{Q}}^{t(k-1)}[}_{\textbf{y}_k} \mathcal{T}[\cdots\\
	\quad \cdots[\mathcal{W}_{\mathcal{Q}}^{t(1)}[\mathcal{T}[\textbf{x}_0]]]\cdots]]]]]\cdots]]],
\end{align*}
where $\mathcal{T}[\mathbf{x}]$ is the gradient operator defined in the previous section, $\mathcal{W}_{\mathcal{Q}}[\mathbf{x}]$ is the quantized consensus operator and $\mathcal{W}_{\mathcal{Q}}^{t(k)}[x]$ denotes $t(k)$ nested quantized consensus operations (steps),
\begin{align*}
	\mathcal{W}_{\mathcal{Q}}^{t(k)}[x] =  \underbrace{\mathcal{W}[\mathcal{Q}[\cdots[\mathcal{W}[\mathcal{Q}[\mathcal{W}[\mathcal{Q}}_{\text{$t(k)$ operations}}[x]]]]]\cdots]],
\end{align*}
where $\mathcal{Q}[x]$ is the quantization operator. Similar to the NEAR-DGD method, the NEAR-DGD+Q method can be viewed as a method that produces an intermediate iterate $\textbf{y}_k$ after the gradient step, and the iterate $\textbf{x}_k$ after the quantized consensus steps. The iterates $\textbf{x}_k$ and $\textbf{y}_k$ can be expressed as
\begin{gather}	
	\textbf{x}_k  = \mathcal{W}_{\mathcal{Q}}^{t(k)}[\mathbf{y}_k] \label{eq:commq}\\
	\textbf{y}_{k+1}  =\mathcal{T}[\mathbf{x}_k] =  \textbf{x}_k -   \alpha \nabla \textbf{f}(\textbf{x}_k). \label{eq:gradq}
\end{gather}

Since the NEAR-DGD+Q method has an extra step (quantization) compared to the NEAR-DGD method, one can express the method with an additional variable $\textbf{q}_k^j$, that is produced after each quantization step ($j=1,...,t(k)$). For simplicity, let $t(k)=t$. Given $\mathbf{y}_k$, the step \eqref{eq:commq} can be decomposed as the iterative scheme (for $j=1,...,t$)
\begin{align}
    \textbf{q}_k^j &= \begin{cases}
       \mathcal{Q}[\textbf{y}_{k}], \qquad \quad  &\text{ $j=1$} \\
       \mathcal{Q}[\textbf{x}^{j-1}_{k}], \qquad &\text{ $j=2,...,t$}
        \end{cases}\\
    \textbf{x}_{k}^j &= \textbf{Z} \textbf{q}_{k}^j, \qquad \qquad \ \ \ \ \text{ $j=1,...,t$} \label{eq:x_q}
\end{align}
where {$\textbf{Z}=\textbf{W}\otimes I_p$ and} $\textbf{x}_{k}^j$ denotes the variable after the $j^{th}$ round of quantization and consensus. Note, $\textbf{x}_{k}^t$ denotes the output after $t$ rounds of communications (output of step \eqref{eq:commq}), and is the input for the gradient step \eqref{eq:gradq}. For completeness, using this notation, the gradient step \eqref{eq:gradq} is expressed as
\begin{align}       \label{eq:gradqt}	
	\textbf{y}_{k+1} =  \textbf{x}_k^t -   \alpha \nabla \textbf{f}(\textbf{x}_k^t).
\end{align}
If $\mathcal{Q}[\textbf{x}] = \textbf{x}$ (no error due to quantization), we recover the NEAR-DGD method. {Here we consider the NEAR-DGD+Q method where only the communication steps are quantized. One could design a general variant of this method that quantizes both the iterates and the gradients.}



\section{Convergence Analysis}
\label{sec:conv_analysis}

In this section, we analyze the NEAR-DGD+Q method, and its variants.  We begin by assuming that the algorithm takes a fixed number of consensus ($t$) steps per iteration and that the level of quantization is fixed---NEAR-DGD$^t$+Q. We then generalize the results to the case where the number of communication steps varies at every iteration, and the quantization is adaptive---NEAR-DGD$^+$+Q. For brevity we omit some of the proofs, and refer interested readers to \cite{near_dgd_q_arxiv}. We make the following assumptions that are standard in the distributed optimization literature \cite{balCC,nedic2009distributed,yuan2016convergence}.
\begin{assum}\label{assm:Lip}
	 Each local objective function $f_i$  has $L_i$-Lipschitz continuous gradients. We define $L = max_i L_i$.
\end{assum}
\begin{assum}\label{assm:Strong}
	Each local objective function $f_i$ is $\mu_i$-strongly convex. 
\end{assum}

For notational convenience, we introduce the following quantities that are used in the analysis 
\begin{gather}		
	\bar{x}_k = \frac{1}{n}\sum_{i=1}^n x_{i,k}^t, \quad \bar{y}_k = \frac{1}{n}\sum_{i=1}^n y_{i,k},  \nonumber\\
	 g_k =  \frac{1}{n}\sum_{i=1}^n \nabla f_i(x_{i,k}^t), \quad
	 \bar{g}_k =  \frac{1}{n}\sum_{i=1}^n \nabla f_i(\bar{x}_{k})\label{eq:g_barg},
\end{gather}
where $\bar x_k \in \mathbb{R}^p$ and $\bar y_k \in \mathbb{R}^p$ correspond to the average of local estimates, $g_k \in\mathbb{R}^p$ represents the average of local gradients at the current local estimates and $\bar g_k \in\mathbb{R}^p$ is the average gradient at $\bar x_k$.

We note that the gradient step \eqref{eq:gradq} in the NEAR-DGD+Q method can be viewed as a single gradient iteration at the point $x_{i,k}^t$ on the following unconstrained problem
\begin{align*} 	
    \min_{x_i \in \mathbb{R}^p} \quad\sum_{i=1}^n f_i(x_i).
\end{align*}
Moreover, let 
\begin{align*}
    &\mathcal{Q}[\mathbf{y}_k] = \mathbf{y}_k + \pmb{\epsilon}_k^0,\\
    &\mathcal{Q}[\mathbf{x}_k^{j-1}] = \mathbf{x}_k^{j-1} + \pmb{\epsilon}_k^{j-1},\quad &\text{ $j=2,...,t$}
\end{align*}
where $\pmb{\epsilon}_k^{j} \in \mathbb{R}^{np}$, the quantization error, is a concatenation of local quantization errors $\epsilon_{i,k}^j \in \mathbb{R}^p$ for all nodes ($1 \leq i \leq n$). Using the above, \eqref{eq:x_q} can be expressed as
\begin{equation}
    \mathbf{x}_k^t = \mathbf{Z}^t\mathbf{y}_{k} + \sum_{j=0}^{t-1} \mathbf{Z}^{t-j}\pmb{\epsilon}_k^{j} \label{eq:x_k_dgdt2}.
\end{equation}
Multiplying equations \eqref{eq:gradqt} and \eqref{eq:x_k_dgdt2} by $\frac{1}{n}(1_n1_n^T \otimes I)$, and using the fact that $\mathbf{W}$ is a doubly stochastic matrix, we have
\begin{align}
    \bar{y}_{k+1} = \bar{x}_{k} - \alpha g_k, \qquad 
    \bar{x}_{k} = \bar{y}_{k}  + \sum_{j=0}^{t-1} \bar{\epsilon}_k^j, \label{eq:bar1}
\end{align}
where 
\begin{align*}
    \bar{\epsilon}_k^j = \frac{1}{n} \sum_{i=1}^n \epsilon_{i,k}^j, \quad \text{for all $0 \leq j \leq t-1$ and $k=0,1,...$}.
\end{align*}
Note, the errors due to quantization are bounded above as
\begin{align}   \label{eq:quant_error_up}
    \| \pmb{\epsilon}_k^j \| \leq \sqrt{np} \Delta = \tilde{\Delta}, \quad \| \bar{\epsilon}_k^j \| \leq 
    \sqrt{p} \Delta = \frac{\tilde{\Delta}}{\sqrt{n}},
\end{align}
for all $0 \leq j \leq t-1$ and all $k=0,1,...$.
We use these observations to bound the iterates $\textbf{x}_k$ and $\textbf{y}_k$.

\begin{lem} 		\label{lem:twt_bound_iterates}
	\textbf{(Bounded iterates)} Suppose Assumptions \ref{assm:Lip}-\ref{assm:Strong} hold, and let the steplength satisfy
	$\alpha < \frac{1}{L}.$
	Then, the iterates generated by the NEAR-DGD$^t$+Q method \eqref{eq:commq}-\eqref{eq:gradq} are bounded, namely,
	\begin{align*}
	\| \textbf{x}_k^t \| \leq  D + \left( 1 + \frac{2}{\nu}\right)t \tilde{\Delta}, \quad {\| \textbf{y}_k \| \leq D + \frac{2t\tilde{\Delta}}{\nu}},
	\end{align*}
	where $D = \| \textbf{y}_0 - \textbf{u}^\star\| + \frac{\nu+4}{\nu}\|\textbf{u}^\star \|$, $\textbf{u}^\star = [u_1^\star;u_2^\star;...;u_n^\star] \in \mathbb{R}^{np}$, $u_i^\star = \arg\min_{u_i}f_i(u_i)$, $\nu = 2\alpha \gamma$, $\gamma = \min_i \gamma_i$ and $\gamma_i = \frac{2\mu_i L_i}{\mu_i + L_i}$, for any $i \in \{1,2,...,n\}$.
\end{lem}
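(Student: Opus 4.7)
The plan is to derive a contractive recursion for $\|\textbf{y}_k - \textbf{u}^\star\|$ and then translate it to norm bounds on the iterates via the triangle inequality. I will exploit three structural facts: (i) $\textbf{u}^\star$ is a fixed point of the block-separable gradient operator $\mathcal{T}$, since $\nabla f_i(u_i^\star)=0$ for every $i$, so $\mathcal{T}[\textbf{u}^\star]=\textbf{u}^\star$; (ii) the consensus matrix $\textbf{Z} = \textbf{W}\otimes I_p$ has spectral norm $1$ because $\textbf{W}$ is symmetric and doubly stochastic, hence $\|\textbf{Z}^s\|\le 1$ and $\|\textbf{Z}^s - I_{np}\|\le 2$; and (iii) each per-round quantization error is uniformly bounded by $\tilde{\Delta}$ via \eqref{eq:quant_error_up}.

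The first step is to bound $\|\textbf{x}_k^t - \textbf{u}^\star\|$ in terms of $\|\textbf{y}_k - \textbf{u}^\star\|$. Starting from \eqref{eq:x_k_dgdt2}, I split
\begin{align*}
\textbf{x}_k^t - \textbf{u}^\star = \textbf{Z}^t(\textbf{y}_k - \textbf{u}^\star) + (\textbf{Z}^t - I_{np})\textbf{u}^\star + \sum_{j=0}^{t-1}\textbf{Z}^{t-j}\pmb{\epsilon}_k^{j},
\end{align*}
and the triangle inequality together with the three facts above yields
\begin{align*}
\|\textbf{x}_k^t - \textbf{u}^\star\| \le \|\textbf{y}_k - \textbf{u}^\star\| + 2\|\textbf{u}^\star\| + t\tilde{\Delta}.
\end{align*}

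The second step is the contractivity of the gradient operator around its fixed point $\textbf{u}^\star$. Under Assumptions~\ref{assm:Lip}--\ref{assm:Strong}, co-coercivity combined with strong convexity on each block $f_i$, together with $\alpha<1/L$, yields a contraction of factor $1-\alpha\gamma=1-\nu/2$. Applying it to \eqref{eq:gradqt} and chaining with the step-one bound gives the recursion
\begin{align*}
\|\textbf{y}_{k+1} - \textbf{u}^\star\| \le \left(1 - \tfrac{\nu}{2}\right)\!\left( \|\textbf{y}_k - \textbf{u}^\star\| + 2\|\textbf{u}^\star\| + t\tilde{\Delta} \right).
\end{align*}
Unrolling by induction and using $\sum_{i\ge 0}(1-\nu/2)^i = 2/\nu$, I obtain
\begin{align*}
\|\textbf{y}_k - \textbf{u}^\star\| \le \|\textbf{y}_0 - \textbf{u}^\star\| + \tfrac{4}{\nu}\|\textbf{u}^\star\| + \tfrac{2t\tilde{\Delta}}{\nu}.
\end{align*}
Adding $\|\textbf{u}^\star\|$ to both sides produces the stated bound on $\|\textbf{y}_k\|$ with $D=\|\textbf{y}_0-\textbf{u}^\star\|+\tfrac{\nu+4}{\nu}\|\textbf{u}^\star\|$, and substituting back into the step-one inequality produces the stated bound on $\|\textbf{x}_k^t\|$.

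The main obstacle lies in the contraction constant in step two: the standard cocoercivity calculation gives $\|\mathcal{T}[\textbf{x}]-\textbf{u}^\star\|\le\sqrt{1-\alpha\gamma}\,\|\textbf{x}-\textbf{u}^\star\|$ at the norm level, which would inflate both coefficients in $D$ by a factor of two. To recover the tight linear factor $(1-\alpha\gamma)$ required for the constants stated in the lemma, one must either argue component-wise using the sharper bound $\max(|1-\alpha\mu_i|,|1-\alpha L_i|)=1-\alpha\mu_i$ valid for $\alpha\le 1/L_i$, or carry the induction through at the squared-norm level and take the square root only at the end (absorbing cross-terms with Young's inequality). Once this contraction is in hand, the rest of the argument is straightforward geometric-series bookkeeping.
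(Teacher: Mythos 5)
Your proposal follows essentially the same route as the paper's proof: decompose $\textbf{x}_k^t-\textbf{u}^\star$ via \eqref{eq:x_k_dgdt2}, invoke $\|\textbf{Z}\|\leq 1$, $\|I-\textbf{Z}^t\|\leq 2$ and $\|\pmb{\epsilon}_k^j\|\leq\tilde{\Delta}$, contract through the fixed point $\textbf{u}^\star$ of $\mathcal{T}$, and unroll the resulting recursion with a geometric series to obtain $D$. The contraction-constant obstacle you flag at the end is resolved in the paper simply by citing Nesterov's Theorem 2.1.15 component-wise to get the square-root factor $\sqrt{1-2\alpha\gamma_i}$, aggregated to $\sqrt{1-\nu}$; since $1-\sqrt{1-\nu}\geq\nu/2$, the geometric sum is still bounded by $2/\nu$ exactly as in your linear-factor computation, so the stated constants follow without any squared-norm induction or eigenvalue argument (and your observation that a naive use of $\sqrt{1-\alpha\gamma}$ would double the coefficients is a fair caveat that applies to the paper's own appeal to that theorem as well).
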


\begin{proof} Using standard results for the gradient descent method \cite[Theorem 2.1.15, Chapter 2]{nesterov2013introductory}, and noting that $\alpha < \frac{1}{L} < \frac{2}{\mu_i + L_i}$, which is the necessary condition on the steplength, we have for any $i \in \{1,2,...,n\}$
\begin{align*}
	\| x_{i,k}^t - \alpha \nabla {f}_i({x}_{i,k}^t) - {u_i}^\star\| &\leq \sqrt{1 - 2\alpha \textcolor{black}{\gamma}_i} \|x_{i,k}^t - {u_i}^\star\|.
\end{align*}
From this, we have,
\begin{align}
    \| \textbf{x}_k^t - \alpha \nabla \textbf{f}(\textbf{x}_k^t) - \textbf{u}^\star\| &= \sqrt{\sum_{i=1}^n \| x_{i,k}^t - \alpha \nabla {f}_i({x}_{i,k}^t) - {u_i}^\star\|^2}\nonumber\\
	& \leq \sqrt{\sum_{i=1}^n (1 - 2\alpha \textcolor{black}{\gamma}_i) \|x_{i,k}^t - {u_i}^\star\|^2}\nonumber\\
	& \leq \sqrt{ (1- \nu)} \| \textbf{x}_k^t - \textbf{u}^\star \| \label{eq:dgdt_gradd}.
\end{align}
where the last inequality follows from the definition of $\nu$.

Using the definitions of $\nu$, $\textbf{y}_{k+1}$ and Eqs. \eqref{eq:x_k_dgdt2} and \eqref{eq:dgdt_gradd}, we have
\begin{align*}
	\| \textbf{y}_{k+1} - \textbf{u}^\star \| &= \| \textbf{x}_k^t - \alpha \nabla \textbf{f}(\textbf{x}_k^t) - \textbf{u}^\star \|\\
	& \leq \sqrt{(1-\nu)} \| \textbf{x}_k^t -\textbf{u}^\star\|\\
	& = 
    \sqrt{(1-\nu)} \|\mathbf{Z}^t\mathbf{y}_{k} + \sum_{j=0}^{t-1} \mathbf{Z}^{t-j}\pmb{\epsilon}_k^{j} - \textbf{u}^* \|
\end{align*}
The eigenvalues of matrix $\textbf{Z}^t$ are the same as those of the matrix $\textbf{W}^t$. The spectral properties of $\textbf W$ guarantee that the magnitude of each eigenvalue is upper bounded by 1. Hence $\|\textbf{Z}\|\leq 1$ and $\|I-\textbf{Z}^t\|\leq 2$ for all $t$. Hence, the above relation implies that
\begin{align*} 
    \| \textbf{y}_{k+1} - \textbf{u}^\star \| & \leq \sqrt{1-\nu} \left[ \|\mathbf{y}_k - \mathbf{u}^*\| + \sum_{j=0}^{t-1} \| \pmb{\epsilon}_k^{j}\| + 2\|\mathbf{u}^*\|\right] \\
    & \leq \sqrt{1-\nu} \left[ \|\mathbf{y}_k - \mathbf{u}^*\| + 2\|\mathbf{u}^*\| + t \tilde{\Delta}\right].
\end{align*}

Recursive application of the above relation gives,
\begin{align*}
\|\textbf{y}_{k+1}-\textbf{u}^*\| \leq \|\textbf{y}_0 - \textbf{u}^*\| + \frac{4}{\nu}\|\textbf{u}^*\| + \frac{2t}{\nu}\tilde{\Delta}.
\end{align*}
Thus, we bound the iterate as
\begin{align*}
\|\textbf{y}_{k+1}\| &\leq \| \textbf{y}_0-\textbf{u}^*\| + \frac{\nu+4}{\nu}\|\textbf{u}^*\| + \frac{2t}{\nu}\tilde{\Delta} = D + \frac{2t\tilde{\Delta}}{\nu},
\end{align*}
and
\begin{align*}
    \|\textbf{x}^t_{k+1}\| & = \| \mathbf{Z}^t\mathbf{y}_{k+1} + \sum_{j=0}^{t-1} \mathbf{Z}^{t-j}\pmb{\epsilon}_k^{j}\| \\
    & \leq \|\mathbf{y}_{k+1}\| + t \tilde{\Delta} \leq D + \left( 1 + \frac{2}{\nu}\right)t \tilde{\Delta}.
\end{align*}
\end{proof}

Lemma \ref{lem:twt_bound_iterates} shows that the iterates generated by the NEAR-DGD$^t$+Q method are bounded. Since eigenvalues of $\textbf{Z}^t$ and $I-\textbf{Z}^t$ are bounded above by 1 and 2, for any $t$, respectively, the same analysis can be used to show that the iterates generated by the NEAR-DGD$^+$+Q method are also bounded. Note, that the result of Lemma \ref{lem:twt_bound_iterates} reduces to \cite[Lemma V.2.]{balCC} in the case where there is no quantization error.

\begin{lem} 	\label{lem:twt_bound_dev_mean}
\textbf{(Bounded deviation from mean)} If Assumptions \ref{assm:Lip} \& \ref{assm:Strong} hold. Then, starting from $x_{i,0}= s_0$ or $y_{i,0}= s_0$ ($i \in \{1,2,...,n\}$), the total deviation of each agent's estimate ($x_{i,k}$) from the mean is bounded, namely,
\begin{gather}		\label{eq:twt_lem2_p1}
	\| x_{i,k}^t - \bar{x}_k\| \leq \beta^{{t}} D   +  \left( \frac{2\beta^{{t}}}{\nu} + \frac{\sqrt{n}+1}{\sqrt{n}} \right) t\tilde{\Delta}
\end{gather}
and
\begin{gather}
	\| \nabla f_i(x_{i,k}) - \nabla f_i(\bar{x}_k)\| \leq \beta^{{t}} D L_i   +  \left( \frac{2\beta^{{t}}}{\nu} + \frac{\sqrt{n}+1}{\sqrt{n}} \right) t\tilde{\Delta}L_i
	 \label{eq:twt_lem2_p2}\\
	\| g_k - \bar{g}_k \| \leq \beta^{{t}} DL   +  \left( \frac{2\beta^{{t}}}{\nu} + \frac{\sqrt{n}+1}{\sqrt{n}} \right) t\tilde{\Delta}L	\label{eq:twt_lem2_p3}
\end{gather}
for all $k=1,2,\ldots$ and $i \in \{1,2,...,n\}$. Moreover, the total deviation of the local iterates $y_{i,k}$ is also bounded,
\begin{align}	\label{eq:twt_lem2_p4}
	\| y_{i,k} - \bar{y}_k\| \leq \beta^{{t}} D + 2D  +  \left( \frac{2\beta^{{t}}}{\nu} + \frac{4}{\nu} + 2 \right) t\tilde{\Delta}.
\end{align}
\end{lem}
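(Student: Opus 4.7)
The plan is to prove all four bounds from the explicit expansions already derived in the excerpt, namely $\mathbf{x}_k^t = \mathbf{Z}^t\mathbf{y}_k + \sum_{j=0}^{t-1}\mathbf{Z}^{t-j}\pmb{\epsilon}_k^j$ from \eqref{eq:x_k_dgdt2} and $\bar{x}_k = \bar{y}_k + \sum_{j=0}^{t-1}\bar{\epsilon}_k^j$ from \eqref{eq:bar1}. Let $\mathbf{J}=\tfrac{1}{n}(\mathbf{1}_n\mathbf{1}_n^\top)\otimes I_p$ be the averaging matrix, and recall that since $\mathbf{W}$ is symmetric and doubly stochastic with second largest eigenvalue (in magnitude) $\beta<1$, we have $\|\mathbf{Z}^s-\mathbf{J}\|\le\beta^s$ for every $s\ge 0$ and $\|\mathbf{Z}\|\le 1$. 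These are the only spectral facts needed.

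For \eqref{eq:twt_lem2_p1}, I would subtract $\mathbf{1}_n\otimes\bar{x}_k$ from \eqref{eq:x_k_dgdt2} and use $\mathbf{1}_n\otimes\bar{y}_k=\mathbf{J}\mathbf{y}_k$ and $\mathbf{1}_n\otimes\bar{\epsilon}_k^j=\mathbf{J}\pmb{\epsilon}_k^j$ to obtain $x_{i,k}^t-\bar{x}_k=[(\mathbf{Z}^t-\mathbf{J})\mathbf{y}_k]_i+\sum_{j=0}^{t-1}\bigl([\mathbf{Z}^{t-j}\pmb{\epsilon}_k^j]_i-\bar{\epsilon}_k^j\bigr)$. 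Applying the triangle inequality block-wise, using $\|[\,\cdot\,]_i\|\le\|\cdot\|$, the spectral bound $\beta^t$ on the first term, and the bounds $\|\pmb{\epsilon}_k^j\|\le\tilde{\Delta}$ and $\|\bar{\epsilon}_k^j\|\le\tilde{\Delta}/\sqrt{n}$ from \eqref{eq:quant_error_up}, each summand in the quantization series is at most $\tilde\Delta+\tilde\Delta/\sqrt n=\tfrac{\sqrt n+1}{\sqrt n}\tilde\Delta$. Substituting the iterate bound $\|\mathbf{y}_k\|\le D+2t\tilde\Delta/\nu$ from Lemma \ref{lem:twt_bound_iterates} into $\beta^t\|\mathbf{y}_k\|$ produces exactly \eqref{eq:twt_lem2_p1}.

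Bounds \eqref{eq:twt_lem2_p2} and \eqref{eq:twt_lem2_p3} are immediate consequences of \eqref{eq:twt_lem2_p1}: \eqref{eq:twt_lem2_p2} follows from Assumption \ref{assm:Lip} applied pointwise, and \eqref{eq:twt_lem2_p3} follows by writing $g_k-\bar{g}_k=\tfrac{1}{n}\sum_i[\nabla f_i(x_{i,k}^t)-\nabla f_i(\bar{x}_k)]$, using the triangle inequality, and absorbing every $L_i$ into $L=\max_i L_i$.

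For \eqref{eq:twt_lem2_p4}, the plan is to use the gradient update. From \eqref{eq:gradqt} and \eqref{eq:bar1}, $y_{i,k+1}-\bar{y}_{k+1}=(x_{i,k}^t-\bar{x}_k)-\alpha(\nabla f_i(x_{i,k}^t)-g_k)$. The first term is already controlled by \eqref{eq:twt_lem2_p1}. For the gradient term, use $\nabla f_i(u_i^\star)=0$ and Lipschitz continuity to get $\|\nabla f_i(x_{i,k}^t)\|\le L_i\|x_{i,k}^t-u_i^\star\|\le L\|\mathbf{x}_k^t-\mathbf{u}^\star\|$ and similarly $\|g_k\|\le \tfrac{L}{\sqrt n}\|\mathbf{x}_k^t-\mathbf{u}^\star\|$, so that $\alpha\|\nabla f_i(x_{i,k}^t)-g_k\|\le 2\alpha L\|\mathbf{x}_k^t-\mathbf{u}^\star\|\le 2\|\mathbf{x}_k^t-\mathbf{u}^\star\|$ since $\alpha L<1$. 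The proof of Lemma \ref{lem:twt_bound_iterates} already gives $\|\mathbf{x}_k^t-\mathbf{u}^\star\|\le\|\mathbf{y}_k-\mathbf{u}^\star\|+t\tilde\Delta\le D+(1+2/\nu)t\tilde\Delta$, which after doubling contributes the $2D$ and the $(4/\nu+2)t\tilde\Delta$ terms in \eqref{eq:twt_lem2_p4}. The main obstacle is the bookkeeping of constants: the direct combination of the two pieces produces an extra $\tfrac{\sqrt n+1}{\sqrt n}t\tilde\Delta$ contribution that has to be absorbed (it is bounded by $2t\tilde\Delta$) so that the statement lines up, and one must verify that the base case $k=0$ (where $y_{i,0}=\bar{y}_0$ by the common-start hypothesis) is consistent with the inductive step.
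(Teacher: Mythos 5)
Your treatment of \eqref{eq:twt_lem2_p1}--\eqref{eq:twt_lem2_p3} is correct and essentially identical to the paper's: the paper also writes $\bar{x}_k=\bar{y}_k+\sum_j\bar{\epsilon}_k^j$, bounds the block norm by the full vector norm, splits $\mathbf{x}_k^t$ via \eqref{eq:x_k_dgdt2} into the $(\mathbf{Z}^t-\mathbf{J})\mathbf{y}_k$ part (controlled by $\beta^t$) and the quantization part (controlled by $t\tilde{\Delta}$ plus $t\tilde{\Delta}/\sqrt{n}$), and then invokes Lemma \ref{lem:twt_bound_iterates}; your reorganization of which terms carry the $\mathbf{J}$ is immaterial.

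For \eqref{eq:twt_lem2_p4}, however, your route through the gradient update has a genuine gap and, even after repair, does not produce the stated constants. First, the intermediate claim $\|\mathbf{x}_k^t-\mathbf{u}^\star\|\le\|\mathbf{y}_k-\mathbf{u}^\star\|+t\tilde{\Delta}$ is false: since $\mathbf{u}^\star$ concatenates distinct local minimizers it is not a fixed point of $\mathbf{Z}^t$, so one must write $\mathbf{x}_k^t-\mathbf{u}^\star=\mathbf{Z}^t(\mathbf{y}_k-\mathbf{u}^\star)+(\mathbf{Z}^t-I)\mathbf{u}^\star+\sum_j\mathbf{Z}^{t-j}\pmb{\epsilon}_k^j$, which costs an additional $2\|\mathbf{u}^\star\|$. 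After doubling, your bound therefore carries an extra $2\|\mathbf{u}^\star\|$ beyond the $2D$ in \eqref{eq:twt_lem2_p4}. Second, the extra $\tfrac{\sqrt{n}+1}{\sqrt{n}}t\tilde{\Delta}$ contribution inherited from \eqref{eq:twt_lem2_p1} cannot be ``absorbed'': bounding a positive term by $2t\tilde{\Delta}$ does not remove it, and the right-hand side of \eqref{eq:twt_lem2_p4} has no slack for it. In total your argument proves $\|y_{i,k}-\bar{y}_k\|\le\beta^tD+2D+2\|\mathbf{u}^\star\|+\bigl(\tfrac{2\beta^t}{\nu}+\tfrac{4}{\nu}+2+\tfrac{\sqrt{n}+1}{\sqrt{n}}\bigr)t\tilde{\Delta}$, which is strictly weaker than the lemma. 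The paper avoids both issues by never passing through the gradient step: it writes $\|y_{i,k}-\bar{y}_k\|\le\|y_{i,k}-x_{i,k}^t\|+\|x_{i,k}^t-\bar{y}_k\|$ at the \emph{same} iteration index, bounds the second piece by $\beta^t\|\mathbf{y}_k\|+t\tilde{\Delta}$ as in \eqref{eq:twt_lem2_p1}, and bounds the first by $\|(I-\mathbf{Z}^t)\mathbf{y}_k-\sum_j\mathbf{Z}^{t-j}\pmb{\epsilon}_k^j\|\le 2\|\mathbf{y}_k\|+t\tilde{\Delta}$ using $\|I-\mathbf{Z}^t\|\le 2$; substituting $\|\mathbf{y}_k\|\le D+2t\tilde{\Delta}/\nu$ then yields exactly $(\beta^t+2)D+(\tfrac{2\beta^t}{\nu}+\tfrac{4}{\nu}+2)t\tilde{\Delta}$. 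You should adopt that decomposition for the last bound.
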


\begin{proof}
Consider,
\begin{align*}
	\|x_{i,k}^t - \bar{x}_k\| & = \left\|x^t_{i,k} - \bar{y}_{k}  + \sum_{j=0}^{t-1} \bar{\epsilon}_k^j\right\| \leq \left\| x^t_{i,k} - \bar{y}_{k}\right \| + \frac{t \tilde{\Delta}}{\sqrt{n}}\\
	&\leq \left\|\textbf{x}_k^t - \frac{1}{n}\left((1_n1_n^T)\otimes I \right) \textbf{y}_k\right\| + \frac{t \tilde{\Delta}}{\sqrt{n}}\\
	& = \left\|\mathbf{Z}^t\mathbf{y}_{k} + \sum_{j=0}^{t-1} \mathbf{Z}^{t-j}\pmb{\epsilon}_k^{j} - \frac{1}{n}\left((1_n1_n^T)\otimes I \right) \mathbf{y}_k\right\| + \frac{t \tilde{\Delta}}{\sqrt{n}} \\
	&\leq \left\|\left(\textbf{W}^{t}  - \frac{1}{n}\left(1_n1_n^T \right)\otimes I \right)\right\|\|\textbf{y}_k\| + \left( \frac{\sqrt{n}+1}{\sqrt{n}}\right)t\tilde{\Delta}\\
	&\leq \beta^t \|\textbf{y}_k\| +  \frac{\sqrt{n}+1}{\sqrt{n}}t\tilde{\Delta}\\
	&\leq \beta^{{t}}\left( D + \frac{2t\tilde{\Delta}}{\nu}\right)  +  \frac{\sqrt{n}+1}{\sqrt{n}}t\tilde{\Delta},
\end{align*}
where the first equality is due to \eqref{eq:bar1} and the last inequality is due to Lemma \ref{lem:twt_bound_iterates}.

The result \eqref{eq:twt_lem2_p2} is a direct consequence of the \eqref{eq:twt_lem2_p1} and the Lipschitz continuity of individual gradients (Assumption \ref{assm:Lip}). To establish the next result \eqref{eq:twt_lem2_p3}, we have
\begin{align*}
    \|g_k - \bar{g}_k\| &= \left\|\frac{1}{n} \sum_{i=1}^{n} \left(\nabla f_i(x_{i,k}^t) -  \nabla f_i(\bar{x}_k^t)\right) \right\| \\
    & \leq \frac{1}{n} \sum_{i=1}^{n} L_i \|x_{i,k}^t - \bar{x}_k^t\| \\
    & \leq \beta^{{t}} DL   +  \left( \frac{2\beta^{{t}}}{\nu} + \frac{\sqrt{n}+1}{\sqrt{n}} \right) t\tilde{\Delta}L
\end{align*}

Finally, for the local $y_{i,k}$ iterates in  \eqref{eq:twt_lem2_p4}, consider
\begin{align*}
	\|y_{i,k} - \bar{y}_k\| & \leq \| x_{i,k}^t - \bar{y}_k\| + \| y_{i,k} - x_{i,k}^t\|\\
	& \leq \left\|\textbf{x}_k^t - \frac{1}{n}\left((1_n1_n^T)\otimes I \right) \textbf{y}_k\right\| + \| \textbf{y}_k - \textbf{x}_k^t \|\\
	& = \left\|\mathbf{Z}^t\mathbf{y}_{k} + \sum_{j=0}^{t-1} \mathbf{Z}^{t-j}\pmb{\epsilon}_k^{j} - \frac{1}{n}\left((1_n1_n^T)\otimes I \right) \mathbf{y}_k\right\| \\
	& \qquad + \left\| \textbf{y}_k -\mathbf{Z}^t\mathbf{y}_{k} - \sum_{j=0}^{t-1} \mathbf{Z}^{t-j}\pmb{\epsilon}_k^{j}\right\| \\
	& \leq \left\|\left(\textbf{W}^{t}  - \frac{1}{n}\left(1_n1_n^T \right)\otimes I \right)\right\|\|\textbf{y}_k\| + t\tilde{\Delta}\\
	& \qquad + \left\| (I -\mathbf{Z}^t)\right\| \| \textbf{y}_k\| + t\tilde{\Delta}\\
	& \leq (\beta^t + 2) \|\textbf{y}_k\| + 2 t\tilde{\Delta}\\
	& \leq \beta^tD + 2D + \left( \frac{2\beta^t}{\nu} + \frac{4}{\nu} + 2 \right)t\tilde{\Delta} 
\end{align*}
where the equality is due to \eqref{eq:bar1} and the last inequality is due to Lemma \ref{lem:twt_bound_iterates}.
\end{proof}

Lemma \ref{lem:twt_bound_dev_mean} shows that the distance between the local iterates $x_{i,k}$ and $y_{i,k}$ are bounded from their means. As was the case with Lemma \ref{lem:twt_bound_iterates}, the result of Lemma \ref{lem:twt_bound_dev_mean} reduces to \cite[Lemma V.2.]{balCC} in the case where there is no quantization error.

We now investigate the optimization error of the NEAR-DGD$^t$+Q method. To this end, we make use of a slightly modified version of an observation made in \cite[Section V]{balCC} that is due to the doubly-stochastic nature of $\textbf{W}$. Namely,
\begin{align}	\label{eq:twt_errors}
	\bar{y}_{k+1} = \bar{y}_k - \alpha g_k + \sum_{j=0}^{t-1} \bar{\epsilon}_k^j,
\end{align}
can be viewed as an inexact gradient descent step for
\begin{align*} 	
	\min_{x\in \mathbb{R}^p} \bar{f}(x) = \frac{1}{n} \sum_{i=1}^n f_i (x),
\end{align*}
 where $\bar{g}_k$ is the exact gradient. {If Assumptions \ref{assm:Lip} and \ref{assm:Strong} hold, then it can be shown that the function $\bar{f}(x)$ is $\mu_{\bar{f}}$-strongly convex and has $L_{\bar{f}}$-Lipschitz continuos gradients.\footnote{Note, $\mu_{\bar{f}}  = \frac{1}{n} \sum_{i=1}^n \mu_i$, and $L_{\bar{f}} = \frac{1}{n} \sum_{i=1}^n L_i$.}}

We should mention that contrary to the analysis in \cite{balCC}, in this work we consider the error instead of the square of the error, and as such we are able to achieve tighter bounds. 

\begin{thm} \label{thm:twt_bound_dist_min}
	\textbf{(Bounded distance to minimum)} Suppose Assumptions \ref{assm:Lip}-\ref{assm:Strong} hold, and let the steplength satisfy
	$\alpha \leq \min \left \{ {\frac{1}{L}, c_6} \right \}$,
	where 
	$c_6 = \frac{2}{\mu_{\bar{f}} +L_{\bar{f}}}$. Then, 
	the iterates generated by the NEAR-DGD$^t$+Q method \eqref{eq:commq}-\eqref{eq:gradq} satisfy 
\begin{align*}
\begin{split}
		\| \bar{x}_{k} - x^\star \| \leq c_1^{k} \| &\bar{x}_{0} - x^\star\|\\& + \frac{c_3\beta^t}{(1-c_1)}+ \frac{c_4\beta^t t\tilde{\Delta}}{(1-c_1)}+ \frac{c_5t\tilde{\Delta}}{(1-c_1)},
		\end{split}
\end{align*}
where
\begin{gather*}
	{c_1 = \sqrt{1 - \alpha c_2}}, \;\; c_2 = \frac{2\mu_{\bar{f}} L_{\bar{f}}}{\mu_{\bar{f}} + L_{\bar{f}}},\\
	c_3 = \alpha DL, \; \; c_4 = \frac{\alpha 2L}{\nu}, \;\; c_5 = \frac{1}{\sqrt{n}}\left( \alpha L(\sqrt{n}+1)+1\right),
\end{gather*}
$x^\star$ is the optimal solution of \eqref{eq:cons_prob1}, $D$ is defined in Lemma \ref{lem:twt_bound_iterates} and $\tilde{\Delta}$ is given in \eqref{eq:quant_error_up}.
\end{thm}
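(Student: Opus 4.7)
The plan is to track the quantity $\|\bar{x}_{k}-x^\star\|$ directly through a one-step recursion, by viewing the update of $\bar{x}_k$ as an inexact gradient step on the averaged objective $\bar f(x)=\tfrac1n\sum_i f_i(x)$. Combining the two identities in \eqref{eq:bar1} gives
\begin{align*}
\bar{x}_{k+1}=\bar{x}_{k}-\alpha g_k+\sum_{j=0}^{t-1}\bar{\epsilon}_{k+1}^{j}.
\end{align*}
Since $\bar g_k=\nabla\bar f(\bar x_k)$ and $\nabla\bar f(x^\star)=0$, I would add and subtract $\alpha\bar g_k$ to split the right-hand side into a clean gradient-descent step on $\bar f$ plus two perturbations: the consensus-induced error $\alpha(g_k-\bar g_k)$ and the quantization error $\sum_{j=0}^{t-1}\bar{\epsilon}_{k+1}^{j}$.

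Next I would apply the standard contraction estimate for gradient descent on a $\mu_{\bar f}$-strongly convex and $L_{\bar f}$-smooth function (Nesterov, Thm.~2.1.15), which under $\alpha\le c_6=2/(\mu_{\bar f}+L_{\bar f})$ yields
\begin{align*}
\|\bar x_k-\alpha\bar g_k-x^\star\|\le \sqrt{1-\alpha c_2}\,\|\bar x_k-x^\star\|=c_1\|\bar x_k-x^\star\|,
\end{align*}
with $c_2=\tfrac{2\mu_{\bar f}L_{\bar f}}{\mu_{\bar f}+L_{\bar f}}$. Applying the triangle inequality on the decomposition above then gives
\begin{align*}
\|\bar x_{k+1}-x^\star\|\le c_1\|\bar x_k-x^\star\|+\alpha\|g_k-\bar g_k\|+\sum_{j=0}^{t-1}\|\bar{\epsilon}_{k+1}^{j}\|.
\end{align*}
The gradient-tracking term is already handled by Lemma~\ref{lem:twt_bound_dev_mean}, which bounds $\|g_k-\bar g_k\|$ by $\beta^t DL+\bigl(\tfrac{2\beta^t}{\nu}+\tfrac{\sqrt n+1}{\sqrt n}\bigr)t\tilde\Delta L$, while the quantization term is bounded componentwise by \eqref{eq:quant_error_up}, giving $\sum_{j=0}^{t-1}\|\bar{\epsilon}_{k+1}^{j}\|\le t\tilde\Delta/\sqrt n$. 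Regrouping the constants produces exactly the per-step inequality
\begin{align*}
\|\bar x_{k+1}-x^\star\|\le c_1\|\bar x_k-x^\star\|+c_3\beta^t+c_4\beta^t t\tilde\Delta+c_5 t\tilde\Delta,
\end{align*}
with $c_3,c_4,c_5$ matching those stated in the theorem.

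The final step is a routine unrolling: since $c_1<1$ (because the steplength restriction $\alpha\le 1/L\le c_6$ ensures $\alpha c_2>0$ and $1-\alpha c_2<1$), iterating the recursion and bounding the resulting geometric series by its infinite sum $1/(1-c_1)$ yields the claimed inequality. The main obstacle I anticipate is purely bookkeeping: ensuring that $\bar f$ inherits strong convexity with constant $\mu_{\bar f}=\tfrac1n\sum_i\mu_i$ and smoothness with $L_{\bar f}=\tfrac1n\sum_i L_i$ so that the Nesterov contraction applies, and carefully reconciling the two stepsize conditions $\alpha<1/L$ (needed for Lemma~\ref{lem:twt_bound_iterates}) and $\alpha\le c_6$ (needed for the contraction) into the single hypothesis $\alpha\le\min\{1/L,c_6\}$. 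No genuinely new estimates are required beyond those already collected in Lemmas~\ref{lem:twt_bound_iterates}--\ref{lem:twt_bound_dev_mean}.
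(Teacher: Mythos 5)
Your proposal is correct and follows essentially the same route as the paper's proof: the same decomposition of the $\bar{x}_k$ update into an exact gradient step on $\bar f$ plus the consensus error $\alpha(g_k-\bar g_k)$ and the averaged quantization error, the same Nesterov contraction bound, the same invocation of Lemma \ref{lem:twt_bound_dev_mean} and of \eqref{eq:quant_error_up}, and the same geometric unrolling. The only (immaterial) difference is the iteration index on the quantization-error term, which does not affect the uniform bound $t\tilde{\Delta}/\sqrt{n}$.
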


\begin{proof} Using the definitions of the $\bar{x}_k$ and $g_k$, and \eqref{eq:twt_errors}, we have
\begin{align}		
	\| \bar{x}_{k+1} - x^\star \| &\leq \| \bar{x}_{k} - x^\star - \alpha \bar{g}_k\| + \alpha\| \bar{g}_k - {g}_k \| + \left\| \sum_{j=0}^{t-1} \bar{\epsilon}_k^j\right\|\nonumber\\
	& \leq \| \bar{x}_{k} - x^\star - \alpha \bar{g}_k\| + \alpha\| \bar{g}_k - {g}_k \| + \frac{t \tilde{\Delta}}{\sqrt{n}}
	. \label{eq:twt_thm1}
\end{align}
The result of Lemma \ref{lem:twt_bound_dev_mean} bounds the quantity $\| \bar{g}_k - {g}_k \|$. Consider the first term on the right hand side of \eqref{eq:twt_thm1}, and observe that this is precisely the distance to optimality after performing a single gradient step on the function $\bar{f}$. Therefore, by \cite[Theorem 2.1.15, Chapter 2]{nesterov2013introductory}, we have
\begin{align}		\label{eq:neardgdt_thm_bound_distance}
	\| \bar{x}_{k} - x^\star - \alpha \bar{g}_k\| & \leq \sqrt{1 - \alpha c_2}\|  \bar{x}_{k} - x^\star \|.
\end{align}
Combining \eqref{eq:twt_thm1}, \eqref{eq:neardgdt_thm_bound_distance} and using \eqref{eq:twt_lem2_p3},
\begin{align*}			
	\| \bar{x}_{k+1} - x^\star \| & \leq \sqrt{1 - \alpha c_2}\|  \bar{x}_{k} - x^\star \| +\alpha \beta^t DL  \nonumber\\
	& \qquad   +\alpha \left( \frac{2 \beta^t}{\nu} + \frac{\sqrt{n}+1}{\sqrt{n}}\right)t \tilde{\Delta}L + \frac{t\tilde{\Delta}}{\sqrt{n}}.
\end{align*}
Recursive application of the above, and using the definitions of $c_1$, $c_3$, $c_4$ and $c_5$ yields
\begin{align*}
\begin{split}
	\| \bar{x}_{k} - x^\star \| \leq c_1^{k} \| &\bar{x}_{0} - x^\star\|  \\&+ \frac{c_3\beta^t}{(1-c_1)}+ \frac{c_4\beta^t t\tilde{\Delta}}{(1-c_1)}+ \frac{c_5t\tilde{\Delta}}{(1-c_1)},
	\end{split}
\end{align*}
which concludes the proof.
\end{proof}

Theorem \ref{thm:twt_bound_dist_min} shows that the average of the iterates generated by the NEAR-DGD$^t$+Q method converge to a neighborhood of the optimal solution whose radius is defined by the steplength, the second largest eigenvalue of $\textbf{W}$, the number of consensus steps and the quantization error. We now provide a convergence result for the local agent estimates of the NEAR-DGD$^t$+Q method.

\begin{cor}
\label{cor:twt_bound_dist_min} \textbf{(Local agent convergence)} Suppose Assumptions \ref{assm:Lip}-\ref{assm:Strong} hold, and let the steplength satisfy
	$\alpha \leq \min \left \{ {\frac{1}{L}, c_6} \right \}$.
	Then, 
	for $k=0,1,\dots$
\begin{align*}
	\| x_{i,k}^t - x^\star \| &\leq c_1^{k} \| \bar{x}_{0} - x^\star\| + \left(\frac{c_3}{(1-c_1)} + D \right)\beta^t \\
	& \qquad + \left(\frac{c_4}{(1-c_1)} + \frac{2}{\nu} \right)\beta^t t\tilde{\Delta}\\
	& \qquad + \left(\frac{c_5}{(1-c_1)}+ \frac{\sqrt{n} + 1}{\sqrt{n}} \right)t\tilde{\Delta},\\
	\| y_{i,k} - x^\star \| &\leq c_1^{k} \| \bar{x}_{0} - x^\star\| + \left(\frac{c_3}{(1-c_1)} + D \right)\beta^t \\
	& \qquad + \left(\frac{c_4}{(1-c_1)} + \frac{2}{\nu} \right)\beta^t t\tilde{\Delta}\\
	& \qquad + \left(\frac{c_5}{(1-c_1)} + \frac{4}{\nu} + \frac{2\sqrt{n} + 1}{\sqrt{n}} \right)t\tilde{\Delta} + 2D,
\end{align*}
where $c_1$, $c_3$, $c_4$, $c_5$ and $c_6$ are given in Theorem \ref{thm:twt_bound_dist_min}.
\end{cor}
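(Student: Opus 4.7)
The plan is to treat this as a routine triangle-inequality corollary of the two preceding results. For the $x_{i,k}^t$ bound I would split
\[
\|x_{i,k}^t - x^\star\| \le \|x_{i,k}^t - \bar{x}_k\| + \|\bar{x}_k - x^\star\|,
\]
apply \eqref{eq:twt_lem2_p1} of Lemma \ref{lem:twt_bound_dev_mean} to the first summand and Theorem \ref{thm:twt_bound_dist_min} to the second, and then regroup the resulting contributions according to whether they carry a factor of $\beta^t$, $\beta^t t\tilde{\Delta}$, or $t\tilde{\Delta}$. The three bracketed coefficients in the stated bound should fall out immediately from this regrouping, with $D$ contributed by the deviation bound and $c_3/(1-c_1)$ contributed by the Theorem, etc.

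For the $y_{i,k}$ bound I would introduce an intermediate step through both $\bar{y}_k$ and $\bar{x}_k$:
\[
\|y_{i,k} - x^\star\| \le \|y_{i,k} - \bar{y}_k\| + \|\bar{y}_k - \bar{x}_k\| + \|\bar{x}_k - x^\star\|.
\]
The first summand is precisely the content of \eqref{eq:twt_lem2_p4}. For the middle summand I would invoke the identity $\bar{x}_k = \bar{y}_k + \sum_{j=0}^{t-1} \bar{\epsilon}_k^j$ from \eqref{eq:bar1} together with the per-step quantization bound \eqref{eq:quant_error_up}, which yields $\|\bar{y}_k - \bar{x}_k\| \le t\tilde{\Delta}/\sqrt{n}$. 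The third summand is again Theorem \ref{thm:twt_bound_dist_min}.

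The only nontrivial aspect is matching the coefficients exactly. The trailing $+2D$ and the $4/\nu$ multiplier on $t\tilde{\Delta}$ in the $y$-bound come straight from \eqref{eq:twt_lem2_p4}. The constant $(2\sqrt{n}+1)/\sqrt{n} = 2 + 1/\sqrt{n}$ is assembled from the additive $2$ in \eqref{eq:twt_lem2_p4}, the $1/\sqrt{n}$ arising from $\|\bar{y}_k - \bar{x}_k\|$, and the $(\sqrt{n}+1)/\sqrt{n}$ already embedded in the $c_5$ term inherited from Theorem \ref{thm:twt_bound_dist_min}. I do not expect any genuine obstacle: there is no new estimate to prove, and the only risk is a clerical miscount of one of these constants during the regrouping.
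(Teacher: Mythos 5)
Your proposal is correct and follows essentially the same route as the paper: the $x$-bound is the triangle inequality through $\bar{x}_k$ combined with \eqref{eq:twt_lem2_p1} and Theorem \ref{thm:twt_bound_dist_min}, and the $y$-bound is the paper's decomposition $\|y_{i,k}-\bar{x}_k\| \le \|y_{i,k}-\bar{y}_k\| + \bigl\|\sum_{j=0}^{t-1}\bar{\epsilon}_k^j\bigr\|$ written as a three-term triangle inequality, with all coefficients regrouping as you describe (note only that $(2\sqrt{n}+1)/\sqrt{n}$ is already fully accounted for by $2 + 1/\sqrt{n}$; the $(\sqrt{n}+1)/\sqrt{n}$ inside $c_5$ stays in the $c_5/(1-c_1)$ term and contributes nothing further).
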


\begin{proof} Using the results from Lemma \ref{lem:twt_bound_dev_mean} and Theorem \ref{thm:twt_bound_dist_min},
\begin{align*}
	\| x_{i,k}^t - x^\star \| &\leq \| \bar{x}_k - x^\star \| + \| x_{i,k}^t - \bar{x}_k \| \\
		& \leq c_1^{k} \| \bar{x}_{0} - x^\star\| + \frac{c_3\beta^t}{(1-c_1)}+ \frac{c_4\beta^t t\tilde{\Delta}}{(1-c_1)}+ \frac{c_5t\tilde{\Delta}}{(1-c_1)}\\
		& \qquad + \beta^{{t}} D   +  \left( \frac{2\beta^{{t}}}{\nu} + \frac{\sqrt{n} + 1}{\sqrt{n}} \right) t\tilde{\Delta}.
\end{align*}

Following the same approach for the local iterates $y_{i,k}$, we have
\begin{align*}
	\| y_{i,k} - x^\star \| &\leq \| \bar{x}_k - x^\star \| + \| y_{i,k} - \bar{x}_k \| \\
	& =  \| \bar{x}_k - x^\star \| + \left\| y_{i,k} - \bar{y}_{k} - \sum_{j=0}^{t-1} \bar{\epsilon}_k^j \right\| \\
	& \leq c_1^{k} \| \bar{x}_{0} - x^\star\| + \frac{c_3\beta^t}{(1-c_1)}+ \frac{c_4\beta^t t\tilde{\Delta}}{(1-c_1)}+ \frac{c_5t\tilde{\Delta}}{(1-c_1)}\\
	& \qquad + \beta^{{t}} D + 2D  +  \left( \frac{2\beta^{{t}}}{\nu} + \frac{4}{\nu} + 2 \right) t\tilde{\Delta} + \frac{t \tilde{\Delta}}{\sqrt{n}}.
\end{align*}
\end{proof}


The main takeaway of Theorem \ref{thm:twt_bound_dist_min} is that the iterates generated by the NEAR-DGD$^t$+Q method converge at a linear rate to a neighborhood of the optimal solution that depends on the consensus and quantization errors. A natural question to ask is whether there is a way to increase the number of consensus steps and diminish the error due to quantization, at every iteration, in order to eliminate the error terms and converge to the optimal solution of \eqref{eq:cons_prob1}. Before we proceed, we should mention that the results of Lemmas \ref{lem:twt_bound_iterates} and \ref{lem:twt_bound_dev_mean} extend to the case with increasing number of consensus steps $t(k)$ and adaptive quantization, where the quantization error at the $k^{th}$ iteration is given by $\tilde{\Delta}_k$.

\begin{thm} \label{thm:tw+_bound_dist_min}
	\textbf{(Bounded distance to minimum)} Suppose Assumptions \ref{assm:Lip}-\ref{assm:Strong} hold, and let the steplength satisfy
	$\alpha \leq \min \left \{ {\frac{1}{L}, c_6} \right \}$,
	Then, 
	the iterates generated by the NEAR-DGD$^+$+Q method \eqref{eq:commq}-\eqref{eq:gradq} satisfy 
\begin{align*}		
	\| \bar{x}_{k+1} - x^\star \| & \leq c_1\|  \bar{x}_{k} - x^\star \| \\
	& \qquad + c_3 \beta^{t(k)} + c_4 \beta^{t(k)} t(k) \tilde{\Delta}_k + c_5 t(k) \tilde{\Delta}_k,
\end{align*}
where $c_1$, $c_3$, $c_4$, $c_5$ and $c_6$ are given in Theorem \ref{thm:twt_bound_dist_min},
$x^\star$ is the optimal solution of \eqref{eq:cons_prob1}, $D$ is defined in Lemma \ref{lem:twt_bound_iterates} and $\tilde{\Delta}_k$ is an upper bound on the quantization error at the $k^{th}$ iteration. Moreover, for any strictly increasing sequence $\{t(k)\}_k$, with $\lim_{k \rightarrow \infty} t(k) \rightarrow \infty$, and strictly decreasing sequence $\{\tilde{\Delta}_k\}_k$, with $\lim_{k \rightarrow \infty} t(k)\tilde{\Delta}_k \rightarrow 0$ the iterates produced by the NEAR-DGD$^+$+Q algorithm converge to $x^\star$. 
\end{thm}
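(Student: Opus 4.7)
The plan is to obtain the one-step inequality by adapting the proof of Theorem \ref{thm:twt_bound_dist_min} to the time-varying setting, and then to deduce the asymptotic convergence claim from a standard elementary fact about perturbed linear contractions. The main technical care lies in confirming that Lemmas \ref{lem:twt_bound_iterates} and \ref{lem:twt_bound_dev_mean} carry over to the adaptive regime, which is where I expect the only real bookkeeping obstacle to appear.

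First I would replay the argument leading to \eqref{eq:twt_thm1}, but with the time-varying quantities, to obtain
\begin{equation*}
\|\bar{x}_{k+1} - x^\star\| \leq \|\bar{x}_k - x^\star - \alpha \bar{g}_k\| + \alpha \|\bar{g}_k - g_k\| + \tfrac{t(k)\tilde{\Delta}_k}{\sqrt{n}}.
\end{equation*}
The Nesterov contraction for the strongly-convex smooth function $\bar{f}$ gives $\|\bar{x}_k - x^\star - \alpha \bar{g}_k\| \leq c_1 \|\bar{x}_k - x^\star\|$ unchanged, and the adaptive analogue of \eqref{eq:twt_lem2_p3} yields $\|g_k - \bar{g}_k\| \leq \beta^{t(k)} DL + \bigl(\tfrac{2\beta^{t(k)}}{\nu} + \tfrac{\sqrt{n}+1}{\sqrt{n}}\bigr) t(k)\tilde{\Delta}_k L$. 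Substituting and grouping the constants exactly as in the proof of Theorem \ref{thm:twt_bound_dist_min} recovers the stated coefficients $c_3 \beta^{t(k)} + c_4 \beta^{t(k)} t(k)\tilde{\Delta}_k + c_5 t(k)\tilde{\Delta}_k$.

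The delicate point is justifying the extension of Lemma \ref{lem:twt_bound_iterates} (and hence Lemma \ref{lem:twt_bound_dev_mean}) to varying $t(k),\tilde{\Delta}_k$. The recursion in that proof becomes $\|\textbf{y}_{k+1} - \textbf{u}^\star\| \leq \sqrt{1-\nu}\bigl[\|\textbf{y}_k - \textbf{u}^\star\| + 2\|\textbf{u}^\star\| + t(k)\tilde{\Delta}_k\bigr]$, and telescoping still produces a uniform bound provided $\{t(k)\tilde{\Delta}_k\}_k$ is bounded. The hypothesis $t(k)\tilde{\Delta}_k \to 0$ automatically yields $M := \sup_k t(k)\tilde{\Delta}_k < \infty$, so replacing $t\tilde{\Delta}$ by $M$ in the geometric-series step gives the required uniform iterate bound; the instantaneous factor $t(k)\tilde{\Delta}_k$ is then retained (not replaced by $M$) in the deviation-from-mean estimate \eqref{eq:twt_lem2_p3}, which is what feeds back into the first step above with the correct $k$-dependence.

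For the asymptotic claim, set $a_k = \|\bar{x}_k - x^\star\|$ and $b_k = c_3 \beta^{t(k)} + c_4 \beta^{t(k)} t(k)\tilde{\Delta}_k + c_5 t(k)\tilde{\Delta}_k$, so the one-step bound reads $a_{k+1} \leq c_1 a_k + b_k$ with $c_1 \in (0,1)$. Under the stated hypotheses every summand in $b_k$ vanishes: $\beta^{t(k)} \to 0$ since $\beta \in (0,1)$ and $t(k) \to \infty$, the middle term is dominated by $t(k)\tilde{\Delta}_k \to 0$, and the last term tends to $0$ by assumption. A routine $\varepsilon/2$-splitting argument then finishes the job: given $\varepsilon>0$, pick $K$ with $b_k \leq \tfrac{\varepsilon(1-c_1)}{2}$ for all $k \geq K$, and unroll the recursion past $K$ to obtain $a_{K+m} \leq c_1^m a_K + \tfrac{\varepsilon}{2} < \varepsilon$ for all sufficiently large $m$. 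Hence $a_k \to 0$, i.e.\ $\bar{x}_k \to x^\star$, and the genuine obstacle throughout is essentially bookkeeping rather than new conceptual content.
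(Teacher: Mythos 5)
Your proposal is correct and follows essentially the same route as the paper, whose proof simply states that the argument for Theorem \ref{thm:twt_bound_dist_min} goes through verbatim with $t$ replaced by $t(k)$ and $\tilde{\Delta}$ by $\tilde{\Delta}_k$, and that the three error terms $\beta^{t(k)}$, $\beta^{t(k)}t(k)\tilde{\Delta}_k$ and $t(k)\tilde{\Delta}_k$ all vanish in the limit. You are in fact more careful than the paper on the two points it glosses over --- the uniform iterate bound via $\sup_k t(k)\tilde{\Delta}_k < \infty$ needed to extend Lemmas \ref{lem:twt_bound_iterates}--\ref{lem:twt_bound_dev_mean} to the adaptive regime, and the $\varepsilon/2$-splitting argument showing that a linear contraction with vanishing perturbation converges --- so there is no gap.
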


\begin{proof} The proof of Theorem \ref{thm:tw+_bound_dist_min} is exactly the same as that of Theorem \ref{thm:twt_bound_dist_min}, with the difference that the constant number of consensus steps $t$ is replaced by a varying number of consensus steps $t(k)$ and the fixed upper bound on the consensus error $\tilde{\Delta}$ is replaced by a varying upper bound $\tilde{\Delta}_k$. The convergence result follows from the facts that
\begin{align*} 
    \lim_{k\to\infty}\beta^{t(k)}=0, \; \lim_{k\to\infty}\beta^{t(k)} t(k) \tilde{\Delta}_k=0, \; \lim_{k\to\infty} t(k) \tilde{\Delta}_k=0
\end{align*}
for any increasing sequence $\{t(k)\}$ with $\lim_{k \rightarrow \infty} t(k) \rightarrow \infty$ and decreasing sequence $\{\tilde{\Delta}_k\}_k$ with $\lim_{k \rightarrow \infty} t(k)\tilde{\Delta}_k \rightarrow 0$, and thus the size of the error neighborhood shrinks to 0.
\end{proof}

We now show that for appropriately chosen rates (of diminishing errors), the iterates produced by the NEAR-DGD$^+$+Q algorithm converge at an $R$-Linear rate to $x^\star$.

\begin{thm} \label{thm:tw+_rlinear}
\textbf{(R-Linear convergence of the NEAR-DGD$^+$+Q method)} Suppose Assumptions \ref{assm:Lip} \& \ref{assm:Strong} hold, let the steplength satisfy $\alpha \leq \min \left \{ \frac{1}{L}, c_6 \right \}$
, and let ${{t(k) =k}}$ and $\tilde{\Delta}_k = \eta^k\tilde{\Delta}$ ($0<\eta<1$, $\tilde{\Delta}>0$). Then, the iterates generated by the NEAR-DGD$^+$+Q method \eqref{eq:commq}-\eqref{eq:gradq} converge at an $R$-Linear rate to the solution. Namely,
\begin{align*}
	\| \bar{x}_k - x^\star \| \leq C \rho^k
\end{align*}
for all $k=0,1,2,...$, where
\begin{gather*}
C = \max \left\{ \| \bar{x}_0 -x^\star \|, \frac{8(c_3+c_4\tilde{\Delta}+c_5\tilde{\Delta})}{ {(\alpha c_2)^2}}\right\},\\
 \rho = \max\left\{ \beta,\gamma, {1-\frac{\alpha c_2}{2}}\right\},
\end{gather*}
and $c_2$, $c_3$, $c_4$ and $c_5$ are given in Theorem \ref{thm:tw+_bound_dist_min}.
\end{thm}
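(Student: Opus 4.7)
The plan is to prove the claim by induction on $k$. The base case $k=0$ is immediate from the definition of $C$, which enforces $\|\bar{x}_0 - x^\star\| \leq C = C\rho^0$. For the inductive step, I would assume $\|\bar{x}_k - x^\star\| \leq C\rho^k$ and apply Theorem \ref{thm:tw+_bound_dist_min} with the specific choices $t(k)=k$ and $\tilde{\Delta}_k = \eta^k \tilde{\Delta}$ to obtain
\begin{align*}
\|\bar{x}_{k+1} - x^\star\| \leq c_1 C \rho^k + c_3 \beta^k + c_4 k (\beta\eta)^k \tilde{\Delta} + c_5 k \eta^k \tilde{\Delta}.
\end{align*}
The target inequality $\|\bar{x}_{k+1}-x^\star\|\le C\rho^{k+1}$ thus reduces to showing that the last three terms are bounded by $C(\rho-c_1)\rho^k$.

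To establish this, I would first exploit a gap-type estimate between $c_1$ and $\rho$. Since $c_1 = \sqrt{1-\alpha c_2}$, the elementary inequality $(1-x/2)^2 = 1-x+x^2/4 \geq 1-x$ gives $c_1 \leq 1-\alpha c_2/2 \leq \rho$, and in fact
\begin{align*}
\rho - c_1 \;\geq\; \bigl(1-\tfrac{\alpha c_2}{2}\bigr) - \sqrt{1-\alpha c_2} \;=\; \frac{(\alpha c_2)^2/4}{(1-\alpha c_2/2) + \sqrt{1-\alpha c_2}} \;\geq\; \frac{(\alpha c_2)^2}{8},
\end{align*}
since the denominator is at most $2$. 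Combined with the definition $C \geq 8(c_3+c_4\tilde{\Delta}+c_5\tilde{\Delta})/(\alpha c_2)^2$, this yields $C(\rho-c_1) \geq c_3 + c_4\tilde{\Delta} + c_5\tilde{\Delta}$, which is the quantity I need to absorb the additive error terms.

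The main obstacle will be controlling the factors of $k$ that multiply the geometrically decaying quantities $(\beta\eta)^k$ and $\eta^k$. Dividing the residual by $\rho^k$, I need to bound
\begin{align*}
c_3 (\beta/\rho)^k + c_4\tilde{\Delta}\, k(\beta\eta/\rho)^k + c_5\tilde{\Delta}\, k(\eta/\rho)^k \;\leq\; c_3 + c_4\tilde{\Delta} + c_5\tilde{\Delta},
\end{align*}
uniformly in $k$. Since $\beta,\eta,\beta\eta \leq \rho < 1$, each ratio $\beta/\rho$, $\eta/\rho$, $\beta\eta/\rho$ lies in $(0,1]$; the first term is immediately $\leq c_3$, and for the two polynomial-times-geometric terms I would use the elementary fact that $k q^k \leq 1$ for all $k\geq 0$ whenever $q \in (0,1)$ is small enough (more generally, that $\sup_k k q^k$ is finite and tends to $0$ as $q\to 0$), together with the freedom in how $\rho$ dominates $\eta$ and $\beta\eta$. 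The most delicate case is when $\rho$ is nearly attained by $\eta$; here I expect the proof either to bake a crude uniform bound into the definition of $C$ or to exploit $\beta\eta \leq \eta^2 < \eta \leq \rho$ and similar slack to ensure the ratio is strictly less than $1$. Once this step is carried out, chaining the induction yields the desired $R$-linear rate $\rho$.
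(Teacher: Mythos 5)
Your induction setup, base case, and the gap estimate $\rho - c_1 \ge \bigl(1-\tfrac{\alpha c_2}{2}\bigr) - \sqrt{1-\alpha c_2} \ge \tfrac{(\alpha c_2)^2}{8}$, combined with the lower bound on $C$, are correct and are in fact a cleaner packaging of exactly what the paper does. The genuine gap is the step you yourself flag as the main obstacle: bounding $k(\eta/\rho)^k \le 1$ uniformly in $k$. This does not follow from $\eta \le \rho$: one has $\sup_{k} k q^k = \tfrac{1}{e\ln(1/q)} \to \infty$ as $q \to 1^-$, so the ``elementary fact'' you invoke holds only for $q$ bounded away from $1$, and nothing in the hypotheses forces $\eta/\rho$ to be small. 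Neither of your suggested escapes closes this: $\beta\eta \le \eta$ provides no extra decay for the $c_5\, k\eta^k\tilde{\Delta}$ term, and baking a uniform bound into $C$ would alter the constant stated in the theorem (and still requires $\eta$ strictly below $\rho$).

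The paper's resolution is precisely the role of the otherwise-unexplained $\gamma$ in $\rho = \max\{\beta,\gamma,1-\tfrac{\alpha c_2}{2}\}$: it first fixes a constant $0<\gamma<1$ with $k\eta^k \le \gamma^k$ for all $k$, replaces $k\tilde{\Delta}_k = k\eta^k\tilde{\Delta}$ by $\gamma^k\tilde{\Delta}$ \emph{before} running the induction, and then the three error terms are at most $c_3\rho^k$, $c_4\tilde{\Delta}\rho^k$ and $c_5\tilde{\Delta}\rho^k$, after which your argument goes through verbatim. If you adopt this device your proof is complete. (A caveat that applies to the paper as well: $k\eta^k\le\gamma^k$ for all $k\ge1$ with $\gamma<1$ forces $\gamma\ge\eta e^{1/e}$, hence $\eta< e^{-1/e}\approx 0.69$; for larger $\eta$ one must enlarge $\rho$ or absorb the finite constant $\sup_k k(\eta/\gamma)^k$ into $C$.)
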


\begin{proof} We prove the result by induction. First note that their exists a constant $0<\gamma<1$ such that $k \eta^k \leq \gamma^k$ for all $k$. By the definitions of $C$ and $\rho$ the base case $k=0$ holds. Assume that the result is true for the $k^{th}$ iteration, and consider the $(k+1)^{th}$ iteration. By Theorem \ref{thm:tw+_bound_dist_min}, we have
\begin{align*}
	\| \bar{x}_{k+1} - x^\star \| &\leq c_1 \| \bar{x}_{k} - x^\star \| + c_3\beta^{{k}} + c_4 \beta^k \gamma^k \tilde{\Delta} + c_5 \gamma^k \tilde{\Delta}\\
	& \leq c_1 \left(C\rho^{k}\right) + c_3\beta^{{k}} + c_4 \beta^k \gamma^k \tilde{\Delta} + c_5 \gamma^k \tilde{\Delta}\\
	& =  \left(C\rho^{k}\right) \left[ c_1 + \frac{c_3\beta^{{k}} + c_4 \beta^k \gamma^k \tilde{\Delta} + c_5 \gamma^k \tilde{\Delta}}{(C\rho^{k})}\right]\\
	& \leq \left(C\rho^{k}\right) \left[ c_1 + \frac{c_3 + c_4  \tilde{\Delta} + c_5  \tilde{\Delta}}{C}\right]\\
	& \leq \left(C\rho^{k}\right)^2\left[ \sqrt{1- \alpha c_2} + \frac{(\alpha c_2)^2}{8}\right] \\
	& \leq \left(C\rho^{k}\right)^2\left[ 1- \frac{\alpha c_2}{2}\right] \leq C\rho^{k+1}
\end{align*}
where the second inequality is due to the definition of $\rho$, the third inequality is due to the definitions of $c_1$ and $C$, the fourth inequality is due to $\alpha c_2 <1$, and the last inequality is due to the definition of $\rho$.
\end{proof}

\section{Numerical Results}
\label{sec:num_res}

\setlength\abovedisplayskip{3pt}
\setlength\belowdisplayskip{3pt}

In this section, we present numerical results demonstrating the performance of the NEAR-DGD+Q method in terms of iterations,  number of significant digits transmitted and  cost. We measure the cost as proposed in \cite{balCC}, with the difference that instead of tracking the cost per communication round, we track the cost per significant digit sent. Namely, we define 
\begin{align*}
	{\small
    \text{Cost} = \# \text{Digits Transmitted} \times c_c + \# \text{Computations} \times c_g},
\end{align*}
where $c_c$ and $c_g$ are exogenous application-dependent parameters reflecting the costs of transmitting a significant digit and performing a gradient evaluation, respectively.

We investigated the performance of $2$ different variants of the NEAR-DGD$^+$ method and $4$ different quantization schemes on quadratic functions of the form 
\begin{align*}
		 f(x) = \frac{1}{2} \sum_{i=1}^n x^T A_i x + b_i^Tx,
\end{align*}
where each node $i=\{1,...,n\}$ has local information $A_i \in \mathbb{R}^{p\times p}$ and $b_i \in \mathbb{R}^{p} $. The problem was constructed as described in \cite{mokhtari2017network}; we chose a dimension size $p=10$, the number of nodes was $n=10$ and the condition number was $\kappa = 2$. We considered a $4$-cyclic graph topology (each node is connected to its $4$ immediate neighbors). We define the variants of the NEAR-DGD$^+$ method as NEAR-DGD$^+(a,b,c)$, where $a$ is the number of gradient steps, $b$ is the number of initial consensus steps and $c$ is the number of iterations after which the number of consensus steps is doubled. NEAR-DGD$^+(1,1,k)$ is the NEAR-DGD$^+$ method \cite{balCC}; $1$ gradient step and $k$ consensus steps at the $k^{th}$ iteration. We denote different quantization schemes as $Q(a,b,c)$; $a$ is the initial number of digits transmitted, $b$ is the increase factor and $c$ is the number of iterations after which the number of transmitted digits is increased. Note, $Q(a,-,-)$ denotes the quantization scheme transmitting $a$ significant digits (fixed) at every iteration.


Figures~\ref{fig:error_iter} and~\ref{fig:error_iter_p} illustrate the performance of the NEAR-DGD$^+(1,1,k)$ and NEAR-DGD$^+(1,1,50)$ variants, respectively; we plot relative error ($\|\bar{x}_k - x^\star\|^2/\| x^\star \|^2$) in terms of: $(i)$ iterations, $(ii)$ number of significant digits transmitted, and $(iii)$ cost. The cost parameter $c_g$ was set to $1$; we varied the cost parameter $c_c \in \{ 10^{-4}, 10^4\}$. The step length $\alpha$ was manually tuned for all methods.

\begin{figure}
    \centering
    \includegraphics[width=.22\textwidth]{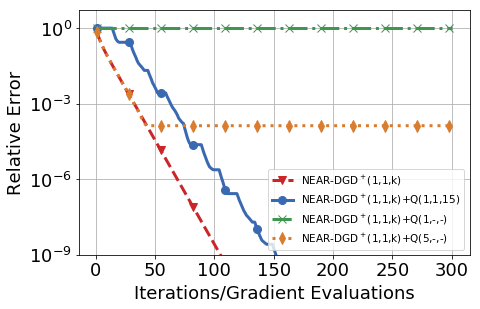}
    \includegraphics[width=.22\textwidth]{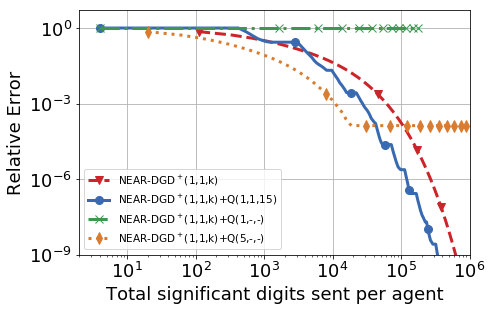}
    
    \includegraphics[width=.22\textwidth]{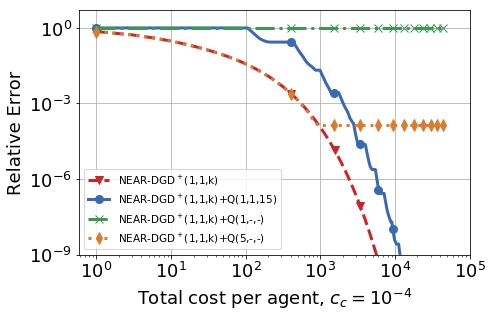}
    \includegraphics[width=.22\textwidth]{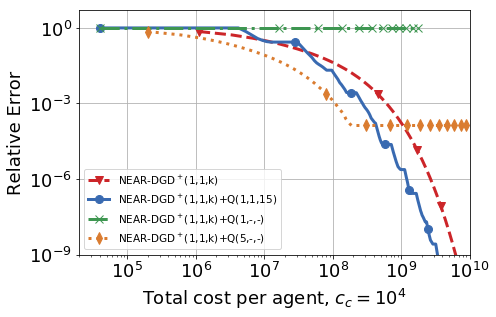}
    \caption{Relative error of NEAR-DGD$^+(1,1,k)$ variants in terms of $(i)$ iterations (top left), $(ii)$ number of significant digits sent (top right),  $(iii)$ cost $c_c=10^{-4}$ (bottom left) $c_c=10^{4}$ (bottom right).}
    \label{fig:error_iter}
\end{figure}
\begin{figure}
    \centering
    \includegraphics[width=.22\textwidth]{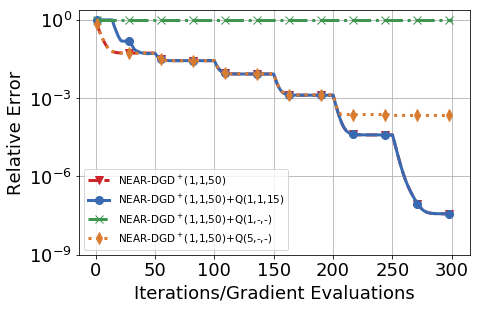}
    \includegraphics[width=.22\textwidth]{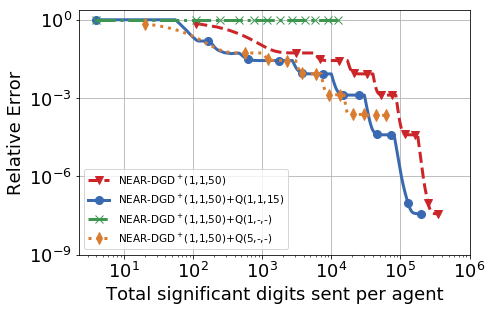}
    
    \includegraphics[width=.22\textwidth]{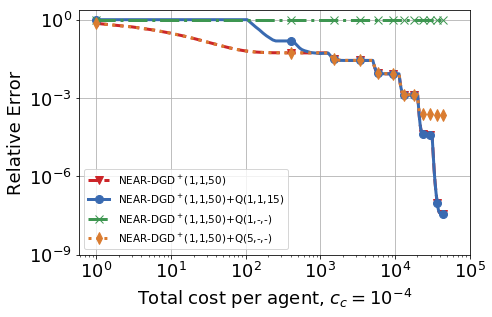}
    \includegraphics[width=.22\textwidth]{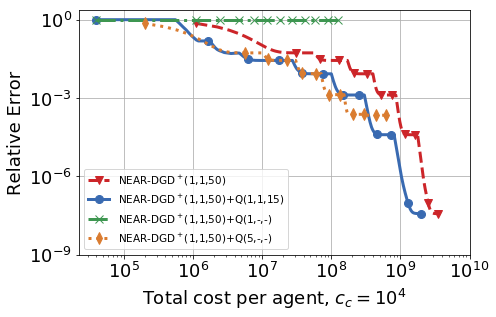}
    \caption{Relative error of NEAR-DGD$^+(1,1,50)$ variants in terms of $(i)$ iterations (top left), $(ii)$ number of significant digits sent (top right),  $(iii)$ cost $c_c=10^{-4}$ (bottom left) $c_c=10^{4}$ (bottom right).}
    \label{fig:error_iter_p}
\end{figure}

As predicted by the theory, when the number of significant digits is fixed the NEAR-DGD$^+$ method only converges to a neighborhood of the solution, the size of which depends on the number of digits transmitted. With regards to  NEAR-DGD$^+(1,1,k)$, although the adaptive quantization variant converges slower than the unquantized variant (in terms of iterations), the quantized variant is able to reach the same accuracy level while transmitting a smaller number of significant digits. On the other hand, for NEAR-DGD$^+(1,1,50)$, the adaptive quantization variant is able to balance the errors and perform equivalently to the unquantized variant while transmitting a smaller number of digits.

In terms of cost, the adaptive quantization variant performs better than the unquantized variant when communication is expensive ($c_c=10^4$). On the other hand, when communication is inexpensive ($c_c=10^{-4}$) and the computation cost dominates, it appears that saving bandwidth with adaptive quantization has little to no benefit. Overall, our experiments indicate that using adaptive quantization one can reduce the communication load without sacrificing accuracy by balancing the errors due to consensus and quantization.


\section{Final Remarks}
\label{sec:fin_rem}

Distributed optimization methods that decouple the communication and computation steps have sound theoretical properties and are efficient over a variety of distributed optimization problems. The NEAR-DGD method is one such method that performs nested communication and gradient steps. In this paper, we generalized the analysis of the NEAR-DGD method to account for quantized communication. Specifically, we showed both theoretically and empirically the effect of performing multiple quantized consensus steps on the rate of convergence and the size of the neighborhood of convergence, and proved $R$-Linear convergence to the exact solution for a method that performs an increasing number of adaptively quantized consensus steps.


\bibliographystyle{ieeetr}
\bibliography{balCC,citationSplitting}

\end{document}